\newlength{\cellsize} \setlength{\cellsize}{18\unitlength}
\newsavebox{\cell}
\sbox{\cell}{\begin{picture}(18,18) \put(0,0){\line(1,0){18}}
\put(0,0){\line(0,1){18}} \put(18,0){\line(0,1){18}}
\put(0,18){\line(1,0){18}}
\end{picture}}
\newcommand\cellify[1]{\def\thearg{#1}\def\nothing{}%
\ifx\thearg\nothing \vrule width0pt height\cellsize depth0pt\else
\hbox to 0pt{\usebox{\cell} \hss}\fi%
\vbox to \cellsize{ \vss \hbox to \cellsize{\hss$#1$\hss} \vss}}
\newcommand\tableau[1]{\vtop{\let\\\cr
\baselineskip -16000pt \lineskiplimit 16000pt \lineskip 0pt
\ialign{&\cellify{##}\cr#1\crcr}}}
\newcommand\bas[1]{\omit \vbox to \cellsize{ \vss \hbox to \cellsize{\hss$#1$\hss} \vss}}
\newtheorem{theorem}{Theorem}[section]
\newtheorem{lemma}[theorem]{Lemma}
\theoremstyle{definition}
\newtheorem{definition}[theorem]{Definition}
\newtheorem{example}[theorem]{Example}
\begin{document}

\title{Rectification of Composition Tableaux}
\author{Melissa Bechard}
\maketitle

\begin{abstract}
In this paper we define an algorithm for rectifying one cell in a composition tableau. We then describe a generalization of this rectification process. The generalization is from one cell in the first column to any number of cells in the first column, provided they are bottom-justified. 
\end{abstract}

\section{Introduction}

\hspace{.1in} The study of symmetric functions has connections to many branches of mathematics, including algebraic geometry, group theory, representation theory, and lie algebras. Schur polynomials, a specific type of symmetric function that form an additive basis for the ring of symmetric polynomials, are the focus of this paper. There are several different bases for symmetric polynomials, and Schur polynomials are the most useful. This basis is described through fillings of partition diagrams. Other bases of symmetric polynomials are special cases of Schur polynomials. Schur polynomials provide information about the multiplicative structure of the cohomology ring of the Grassmannian. In representation theory, Schur polynomials are the characters of the general linear group. Combinatorially,  Schur polynomials appear in the Littlewood-Richardson Rule, evacuation, and the RSK algorithm ~\cite{skew, book}.

\hspace{.1in}Quasisymmetric functions are generalizations of symmetric functions. In a way, quasisymmetric functions can be thought of as functions that are between symmetric and non-symmetric. Quasisymmetric functions also relate to many algebraic structures. For example, the Hopf algebra of quasisymmetric functions is dual to the Hopf algebra of non-commutative symmetric functions. As with symmetric functions, there is a useful basis for quasisymmetric functions similar to Schur polynomials ~\cite{paper}. This basis is described through fillings of composition diagrams, called composition tableaux. 

\hspace{.1in} This paper focuses on the rectification of composition tableaux. There are several reasons rectification is useful. Rectification provides a way to prove when a skew Schur function can be written as a sum of Schur functions ~\cite{equality}. It is unknown when a skew quasisymmetric Schur function can be written as a sum of quasisymmetric Schur functions. We have reason to believe rectification can provide insight to this question. Rectification is also helpful in providing information on multiplication rules. We know how to multiply two Schur functions ~\cite{firstpaper}, as well as how to multiply a quasisymmetric Schur function with a Schur function ~\cite{triangle}. However, much is unknown such as: multiplication of two quasisymmetric Schur functions, multiplication of a skew quasisymmetric Schur function with a Schur function, and multiplication of two skew quasisymmetric Schur functions. In the case of the multiplication of two quasisymmetric Schur functions, rectification might be useful in keeping track of the sign change. Additionally, rectification is imperative for evacuation. Since evacuation is an algorithm defined around rectification of a cell, rectification is necessary in order to carry out this process. Evacuation is invertible, and in fact used to describe certain situations which occur when using the RSK algorithm ~\cite{evacuation, perms}. The RSK algorithm provides a bijection between $\mathbb{N}$-matrices and pairs of semistandard Young tableaux of the same shape $\lambda$. There is also a bijection between $\mathbb{N}$-matrices and composition tableaux which rearrange the same shape. It is unknown which matrices correspond to pairs of composition tableaux of the same shape. Rectification of a composition tableau will provide the foundation for evacuation of a composition tableau, since it should behave similarly to the evacuation of semistandard Young tableaux. 

\section{Background}

\hspace{.1in} A {\em symmetric polynomial} is a polynomial in n variables such that any permutation of the variables yields the original function. The ring of symmetric polynomials is denoted $Sym$. A {\em partition} of a positive integer $n$ is a way to write $n$ as a sum of positive integers. If two sums differ only in their order, then they are considered to be the same partition; if order matters then we call this a {\em composition}. For example, the partitions of $3$ are $(3)$, $(2$ , $1)$ and $(1$ , $1$ , $1)$. On the other hand, the compositions of $3$ are $(3)$, $(2$ , $1)$, $(1$ , $2)$, and $(1$ , $1$ , $1)$. We denote a partition as $\lambda$. A {\em Young diagram} is an array of left-justified cells that gives a visual representation of a partition $\lambda = (\lambda_1, \lambda_2, \lambda_3,...,\lambda_n)$, where $\lambda_i$ gives the number of cells in the $i^{th}$ row of the diagram. The cells are filled with positive integers so that the entries in each row are weakly increasing, while the entries in each column are strictly increasing. The result is a {\em semi-standard Young tableau}, often abbreviated {\em Young tableau} or SSYT.  See Figure \ref{fig:Young} for an example of a Young tableau of shape $\lambda = (4 ,3 ,3, 1)$. The frequency of each number in a tableau is the {\em weight}; refer to Figure \ref{fig: RSSYT}. We can look at a tableau and determine the weight, and from this we can find the associated Schur polynomial. A {\em standard Young tableau} is a Young tableau whose entries are numbered $1$ through $n$, where each number is used exactly once. Refer to Figure \ref{fig:standard}. We focus on {\em reverse semistandard Young tableau}, abbreviated RSSYT, which are analogous to semistandard Young tableaux; RSSYT make the proofs easier in this paper. For further details see \cite{book}.

\begin{figure}
\subfigure[Young tableau] {$ \tableau{2 & 2 & 4 & 5 \\ 4 & 5 & 7 \\ 5 & 6 & 8\\ 7 \\ & & &\\}$
\hspace{.15in}
\label{fig:Young}
}
\hspace{1 in}
\subfigure[standard Young tableau]{$\tableau{1 & 3 & 6 & 10\\ 2 & 5 & 8\\ 4 & 7 & 11\\ 9\\ & & & \\}$
\hspace{.6in}
\label{fig:standard}
}
\caption{A SSYT and a SYT of shape $4331$}
\label{testing}
\end{figure}

\vspace{.1in}
{\bf A reverse semistandard Young tableau} is a filling of a diagram with positive integers such that:

\begin{enumerate}
\item Row entries are weakly decreasing left to right,

\item Column entries are strictly decreasing from top to bottom
\end{enumerate}

\vspace{.1in}
\hspace{-.17in}See Figure \ref{fig: RSSYT} for an example of a reverse semistandard Young tableau.

\begin{figure}
$\tableau{9 & 8 & 6 & 4 & 2\\ 7 & 7 & 5 & 1 & 1\\ 5 & 4 & 2\\ 3 & 2 & 1\\ 1\\}$
\caption{RSSYT of weight (4, 3, 1, 2, 2, 1, 2, 1, 1)}
\label{fig: RSSYT}
\end{figure}

\vspace{.1in}
\hspace{.1in}{\em Schur polynomials} are symmetric polynomials that form an additive basis for the ring of symmetric polynomials. The Schur polynomials are used to record information about the multiplicative structure of groups and the classification of permutation groups. A Schur polynomial relates to the character of the general linear group of $n$ x $n$ matrices and is easily created from a tableau. Partitions are the indexing set for Schur polynomials. We define $S_{\lambda}(x_1, x_2, ..., x_n) = \sum x^T= \sum x_1^{t_1}x_2^{t_2}...x_n^{t_n}$, to be the sum over all SSYT $T$ of shape $\lambda$, where $T$ has weight: $(t_1, t_2,..., t_n)$.  Below is an example of a Schur polynomial, $s_{21}$, of shape $\lambda = (2,1)$ in three variables. To find $s_{21}$ we sum the the ways to fill a Young tableau of shape $\lambda$ with the integers $\{1, 2, 3 \}$.

\vspace{.2in}
\begin{example} A Schur polynomial, $s_{21}$, and the associated fillings. 

$$x_1 ^2x_2 = \tableau{1 & 1 \\ 2\\} \hspace{.2in},  \hspace{.1in} x_1 ^2 x_3 = \tableau{1 & 1 \\ 3} \hspace{.2in},\hspace{.1in}  x_2 ^2x_3 = \tableau{2 & 2 \\ 3}$$ 
$$x_1x_2^2 = \tableau{1 & 2 \\ 2} \hspace{.2in},\hspace{.1in}  x_1x_3^2 = \tableau{1 & 3 \\ 3} \hspace{.2in},\hspace{.1in}  x_2x_3^2 = \tableau{2 & 3 \\ 3}$$
$$x_1x_2x_3 = \tableau{1 & 2 \\ 3} \hspace{.2in}, \hspace{.1in} x_1x_2x_3 = \tableau{1 & 3 \\ 2}$$

$$s_{21}= (x_1 ^2x_2 + x_1 ^2 x_3 + x_2 ^2x_3) + (x_1x_2 ^2 + x_1x_3 ^2 + x_2x_3 ^2)  +(2x_1x_2x_3) $$

\end{example}

\vspace{.2in}
\hspace{.1in}Notice we have grouped $s_{21}$ in a very specific way, which is explained after a few more definitions. A {\em monomial symmetric polynomial}, $m_\alpha (x_1, x_2, ...,x_n)$,  is the sum of all monomials $x^\beta$, for all rearrangements $\beta$ of $\alpha$. This is best understood by an example. 

\vspace{.2in}
\begin{example} A monomial symmetric polynomial in three variables.
 $$m_{21} = (x_1 ^2x_2 + x_1 ^2 x_3 + x_2 ^2x_3) + (x_1x_2 ^2 + x_1x_3 ^2 + x_2x_3 ^2).$$
$$\mbox{Here $\alpha = (2$ , $1$) and $\beta = (2$ , $1$) , ($1$ , $2$).}$$
\end{example}

\vspace{.2in}
\hspace{.1in}First, notice $s_{21} = m_{21} + 2m_{111}$. We have also grouped $m_{21}$ in a very specific way. After a few more definitions this grouping will make more sense. A {\em composition} of $n$ is an ordered sequence of positive integers that sum to $n$. A {\em quasisymmetric function} is a bounded degree formal power series $F \in \mathbb{Q}[[x_1, x_2, ..,x_n]]$ such that for all $k$ the coefficient of $x_{i_1} ^{\alpha_1} x_{i_2} ^{\alpha_2} ...x_{i_k} ^{\alpha_k}$ is equal to the coefficient of $x_1 ^{\alpha_1} x_2 ^{\alpha_2}...x_k ^{\alpha_k}$ for all $i_1 < i_2 <...<i_k$ and for all compositions $(\alpha_1, \alpha_2, ..., \alpha_k)$~\cite{paper}. A quasisymmetric function with finitely many variables is a {\em quasisymmetric polynomial}.  We denote the ring of quasisymmetric polynomials as $Qsym$. The following are quasisymmetric polynomials: $f(x_1 , x_2 , x_3) = x_{1}^2 x_2 + x_{1}^2 x_3 + x_{2}^2 x_3$,  and $g(x_1, x_2, x_3) = x_{1}^2 x_{2} x_{3}^3$. Examples of  polynomials that are not quasisymmetric include $f(x_1, x_2)= x_1 ^2$ and $f(x_1,x_2, x_3)= x_1x_2 ^2 + x_1x_3 ^2$. Unlike symmetric polynomials where we need all possible permutations of all the exponents, quasisymmetric polynomials only need the permutations of the zero exponents keeping the nonzero exponents in the same order. The exponents in our first example are 2, 1, 0 and 2, 0, 1 and 0, 2, 1. Notice the $2$ and $1$ remained in the same order, while the zero exponent was permuted. Since the polynomial contains all permutations of the zero exponents, it is a quasisymmetric polynomial. The second example has exponents 2, 1, 3; since there are no zero exponents, no permutations are needed to make this a quasisymmetric polynomial. It is important to note that adding any permutation of a monomial which uses all the variables results in a quasisymmetric polynomial. For example, adding $x_1 x_2 ^3 x_3 ^2$  to $g$ gives $x_1^2x_2 x_3 ^3 + x_1 x_2 ^3 x_3 ^2$, which remains quasisymmetric. Also notice that quasisymmetric polynomials do not have to be symmetric, as $g$ is not, but all symmetric polynomials are quasisymmetric. Thus, $Sym \subseteq Qsym$. A {\em monomial quasisymmetric polynomial} $M_\alpha (x_1, x_2, ...,x_n)$, is the sum of all the monomials $x^\beta$, where $\beta$ runs over all the compositions of $\alpha$. $M_{21} = x_1 ^2x_2 + x_1 ^2 x_3 + x_2 ^2x_3$ is an example of a monomial quasisymmetric polynomial. Notice that $m_{21}$ can be written in terms of quasisymmetric monomials as $m_{21} = M_{21} + M_{12}$. We can also write $s_{21}$ as a sum of quasisymmetric monomials: $s_{21} = M_{21} + M_{12} + 2M_{111}$.

\vspace{.1in}
\hspace{.1in}A {\bf Composition Tableau (CT)} is a filling of a composition diagram with positive integers, satisfying the following properties:

\begin{enumerate}
\item Entries in the first column are strictly increasing from top to bottom,
\item Row entries are weakly decreasing from left to right,
\item Given any cell $a$ directly to the right of any cell $c$, and some cell $b$ that is below cell $a$ in the same column, but not necessarily directly below, if $a \leq b$ then $b > c$. (We think of empty cells as containing the entry $0$.) 
\end{enumerate}
$$\tableau{c & a \\ & \\ & b\\}$$

\begin{theorem}{\cite{paper}}
There exists a bijection, $\rho$, between composition tableaux and reverse semistandard Young tableaux. 

\end{theorem}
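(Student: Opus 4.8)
The plan is to exhibit $\rho$ and its inverse explicitly and to verify that they are mutually inverse, using the observation that a composition $\alpha$ and the partition $\lambda$ obtained by sorting its parts into weakly decreasing order have identical column lengths: the length of column $j$ equals $\#\{i : \alpha_i \ge j\}$, which depends only on the multiset of parts. Consequently a composition tableau of shape $\alpha$ and a reverse semistandard Young tableau of shape $\lambda = \operatorname{sort}(\alpha)$ occupy the same collection of columns, and I would build $\rho$ so that it preserves the multiset of entries appearing in each column, merely redistributing them among rows. Because it only permutes entries already present, such a map is automatically content-preserving.

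First I would record the fact that every column of a composition tableau consists of distinct entries. For the first column this is condition (1). For a later column $j$, suppose two cells in column $j$, say in rows $i < i'$, held equal entries; taking $a$ to be the upper cell, $c$ its left neighbour in column $j-1$ (which exists because composition diagrams are left-justified), and $b$ the lower cell, condition (3) forces $b > c$ since $a \le b$, i.e. the common value exceeds $c$, while condition (2) forces $c \ge a$, a contradiction. Hence each column is a set, and it makes sense to speak of sorting a column into strictly decreasing order.

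I would then define $\rho$ as follows: given a composition tableau $T$ of shape $\alpha$, form the diagram of shape $\lambda = \operatorname{sort}(\alpha)$ and fill column $j$ with the entries of column $j$ of $T$ arranged in strictly decreasing order from top to bottom. By construction the columns are strictly decreasing, so the output satisfies the column condition for a reverse semistandard Young tableau; the substantive point is that its rows come out weakly decreasing. I expect to prove this by contradiction: an inversion in some row of the output, a smaller entry immediately left of a larger one, would, after tracing the two entries back to their columns in $T$, violate condition (3) together with condition (2). Establishing this row condition is the first genuinely nontrivial step.

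The harder half is the inverse. Given a reverse semistandard Young tableau $U$ of shape $\lambda$, I want to show there is exactly one way to redistribute the entries of each column, keeping the column multisets and hence the column lengths $\lambda'$ fixed, so as to obtain a composition tableau, and that applying $\rho$ to it returns $U$. Existence and uniqueness together are the crux: condition (1) pins down the order of the first column, after which condition (3) propagates a forced placement across the remaining columns and simultaneously determines the composition shape $\alpha$ among the rearrangements of $\lambda$. I would argue uniqueness by induction on the columns, showing at each stage that condition (3) leaves no freedom in which row each entry may occupy, and existence by checking that the placement dictated by these forced choices never violates any condition. Since both $\rho$ and the reconstruction preserve the column multisets, and each of the two classes contains a unique tableau with a given admissible column system, the two maps are mutually inverse. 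I expect the uniqueness argument in this last step to be the main obstacle, as it is precisely where the asymmetric triple condition (3) must be exploited in full.
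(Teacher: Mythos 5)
Your maps coincide with the ones the paper itself uses: your $\rho$ is exactly the paper's ``arrange the column entries in decreasing order,'' and your reconstruction is the paper's $\rho^{-1}$ (the paper realizes the existence-and-uniqueness you describe by the explicit greedy rule ``place each entry in the highest possible position so that the row entries are weakly decreasing''). Note that the paper does not prove this theorem --- it is cited from the literature and only the two maps are described --- so there is no in-paper proof to match step for step. Your overall logical skeleton (both maps preserve column multisets; each class contains a unique object with a given admissible column system; hence the maps are mutually inverse) is sound, and your argument that every column of a composition tableau has distinct entries is correct as written: equal entries in rows $i<i'$ of column $j$ give $a\le b$, so the triple condition forces $b>c$ while row-weak-decrease forces $c\ge a=b$.

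The gap is that the two claims carrying all the content of the theorem are announced but not proved. First, that sorting each column of a composition tableau into decreasing order produces weakly decreasing rows: you say you ``expect to prove this by contradiction'' using conditions (2) and (3), but tracing an inversion back to the original cells is delicate because after sorting, horizontally adjacent entries in the output generally come from cells of $T$ in different rows, so condition (3) does not apply to them directly; you must locate a suitable triple $(c,a,b)$ in $T$, and that is the actual work. Second, existence and uniqueness of the composition-tableau rearrangement of a given RSSYT: you correctly identify that condition (1) forces the first column and that condition (3) must force the rest, but you give no argument that the greedy placement never gets stuck (existence) nor that a non-greedy placement always violates condition (3) (uniqueness). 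Until those two lemmas are established, the proposal is an accurate plan rather than a proof.
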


\hspace{.1in}Given a CT, $\rho$ arranges the column entries in decreasing order to produce a RSSYT. Given a RSSYT, $\rho^{-1}$ arranges the first column in increasing order from top to bottom. Then one column at a time, working from top to bottom, $\rho^{-1}$ places each entry in the highest possible position so that the row entries are weakly decreasing, producing a CT.  Below is an example of this bijection, where T is the RSSYT, and U is the CT. 

$${\bf U =} \hspace{.05in} \tableau{2 & 1\\ 3 & 2 & 2 & 2 &1\\ 4 & 4 & 4 &3\\ 6 & 5 & 5 &1\\ 7 & 7 & 3\\}  \hspace{.05in} \longrightarrow
\hspace{.05in} {\bf T=} \hspace{.05in} \tableau{7 & 7 & 5 & 3 & 1\\ 6 & 5 & 4 & 2\\ 4 & 4 & 3 & 1\\ 3 & 2 & 2\\ 2 & 1\\}$$    
\begin{center}
\begin{picture}(100,0)(0,0)
\setlength{\unitlength}{1pt}
\put(32,70){$\rho$}
\end{picture}
\end{center}

\hspace{.1in}Given two diagrams, one of shape $\lambda$ and one of shape $\mu$, if $\mu \subseteq \lambda$ then the {\em skew diagram}, $\lambda/\mu$, is the cells contained in $\lambda$ but not contained in $\mu$.  

\vspace{.1in}
\hspace{.1in}{\em Rectification} of a tableau is a procedure that gives a way to multiply Young tableaux. The process of rectifying a RSSYT is as follows: remove the highest entry in the first column, and consider this cell as an empty cell. Slide the larger of the two neighbors below and to the right of the empty cell into the empty cell. If the two neighbors have the same entry, then slide the lower entry into the empty cell. Whichever neighbor slid into the empty cell's spot, that neighbor's spot is now the new empty cell. Consider any empty cell as a $0$. Continue this process until there are no more neighbors to compare. We denote the process of rectifying a RSSYT by $\mu$. Below are $T$ and the rectification of $T$.

\begin{center}
\begin{picture}(400,280)(0,0)
\setlength{\unitlength}{1pt}
\put(-20,157){$T$ $=$}
\put(0,170){ \tableau{\rule{1.25em}{1.25em} & 7 & 5 & 3 & 1\\ 6 & 5 & 4 & 2\\ 4 & 4 & 3 & 1\\ 3 & 2 & 2\\ 2 & 1\\}}
\put(70,150){$\rightarrow$}
\put(100, 170){\tableau{7 & \rule{1.25em}{1.25em} & 5 & 3 & 1\\ 6 & 5 & 4 & 2\\ 4 & 4 & 3 & 1\\ 3 & 2 & 2\\ 2 & 1\\}}
\put(170, 150){$\rightarrow$}
\put(200, 170){ \tableau{7 & 5 & 5 & 3 & 1\\ 6 & \rule{1.25em}{1.25em} & 4 & 2\\ 4 & 4 & 3 & 1\\ 3 & 2 & 2\\ 2 & 1\\}}
\put(-20,50){$\rightarrow$}
\put(0, 70){\tableau{7 & 5 & 5 & 3 & 1\\ 6 & 4  & 4 & 2\\ 4 & \rule{1.25em}{1.25em} & 3 & 1\\ 3 & 2 & 2\\ 2 & 1\\}}
\put(70,50){$\rightarrow$}
\put(100, 70){\tableau{7 & 5 & 5 & 3 & 1\\ 6 & 4  & 4 & 2\\ 4 & 3 & \rule{1.25em}{1.25em} & 1\\ 3 & 2 & 2\\ 2 & 1\\}}
\put(170, 50){$\rightarrow$}
\put(200, 70){\tableau{7 & 5 & 5 & 3 & 1\\ 6 & 4  & 4 & 2\\ 4 & 3 & 2 & 1\\ 3 & 2 & \rule{1.25em}{1.25em}\\ 2 & 1\\}}
\put(25, -40){\mbox{Rectification of $T$ $=$}}
\put(130, -30){\tableau{7 & 5 & 5 & 3 & 1\\ 6 & 4  & 4 & 2\\ 4 & 3 & 2 & 1\\ 3 & 2 \\ 2 & 1\\}}
\end{picture}
\end{center}

\vspace{1.2in}
\hspace{.1in}{\em Evacuation} is a reverse sliding algorithm. The following is the algorithm for evacuation of a reverse semistandard Young tableau of shape $\lambda$: 
\begin{enumerate}
\item Remove the largest entry in column one.
\item Rectify the tableau.
\item Start a new RSSYT of the same shape $\lambda$.
\item In the corner that was removed after the rectification, fill in the new RSSYT with $(n -$ the number that was rectified$)$, where $n$ is the number of cells in the original tableau. 
\item Repeat until there are no more cells left to rectify in the original RSSYT.
\end{enumerate}

\vspace{.2in}
\begin{example} RSSYT of shape $\lambda = (3$, $2$, $1)$

\vspace{.1in}
\hspace{3.6in}\mbox{new RSSYT}
$$\tableau{5 & 5 & 4\\ 3 & 2\\ 1\\} \hspace{1.7in} \tableau{{} & {} & {}\\ {} & {} \\ {}\\}$$
$$ \tableau{ \rule{1.27em}{1.27em} & 5 & 4 \\ 3 & 2 \\ 1\\} \hspace{.15in} \rightarrow \hspace{.15in} \tableau{5 & 4\\ 3 & 2\\ 1\\} \hspace{.15in} \Rightarrow \hspace{.4in} \tableau{{} & {} & 1\\ {} & {}\\ {}\\}$$

$$\tableau{\rule{1.27em}{1.27em} & 4\\ 3 & 2\\ 1\\} \hspace{.25in} \rightarrow \hspace{.22in} \tableau{ 4 & 2\\ 3 \\ 1\\} \hspace{.18in} \Rightarrow \hspace{.4in} \tableau{{} & {} & 1\\ {} & 1\\ {}\\}$$

$$ \tableau{ \rule{1.27em}{1.27em} & 2\\ 3 \\ 1\\}\hspace{.25in} \rightarrow \hspace{.22in} \tableau{3 & 2\\ 1\\} \hspace{.18in} \Rightarrow \hspace{.4in} \tableau{{} & {} & 1\\ {} & 1\\ 2\\}$$

$$\tableau{ \rule{1.27em}{1.27em} & 2\\ 1\\} \hspace{.26in} \rightarrow \hspace{.26in} \tableau{2\\1\\}\hspace{.28in} \Rightarrow \hspace{.4in} \tableau{{} & 3 & 1\\ {} & 1\\ 2\\}$$

$$\tableau{\rule{1.27em}{1.27em} \\1\\} \hspace{.35in} \rightarrow \hspace{.3in} \tableau{1\\} \hspace{.31in} \Rightarrow \hspace{.42in} \tableau{{} & 3 & 1\\ 4 & 1\\ 2\\}$$

$$\tableau{\rule{1.27em}{1.27em}\\} \hspace{1.3in} \Rightarrow \hspace{.45in} \tableau{5 & 3 & 1\\ 4 & 1\\ 2\\}$$
\end{example}

\section{Rectifying a Composition Tableau}

{\bf $\phi$: Algorithm to rectify a composition tableau:}
\begin{enumerate}
\item Remove the largest entry in column one.
\item If there is a cell, $a$, directly to the right of the removed cell, move $a$ into the first column in such a way that the column remains strictly increasing. Moving $a$ into the $i^{th}$ row forces the first $(i-1)$ rows to shift up. Let $a$'s original cell be the new removed cell, and $a$'s original column and row be column $c$ and row $r$. 
\item Move the entry from column $(c +1)$ and row $r$ into column $c$, in the highest cell possible so that the rows remain weakly decreasing. This entry may bump any entry of smaller value. If an entry is bumped, that entry's spot is replaced by the new entry, and the bumped entry must find a new cell. 
\item Move the bumped entry to the next highest cell in that column, such that the corresponding row of that cell remains weakly decreasing. Bumped entries are also allowed to bump entries of smaller value. 
\item Repeat steps $3$ and $4$ for each subsequent column.
\item If there is no cell directly to the right of the removed cell then stop.

\end{enumerate}

\begin{center}
\begin{picture}(400,200)(0,0)
\setlength{\unitlength}{1pt}
\put(-25,100){ \tableau{3 & 2 & 2 & 2 \\ 4 \\ 6 & 6 & 6 & 5\\ 7 & 7 & 4 & 3 \\  \rule{1.25em}{1.25em} & 5 & 3 & 1\\}}
\put(45,80){$\rightarrow$}
\put(47, 90){$\phi$}
\put(75, 100){\tableau{3 & 2 & 2 & 2 \\ 4 \\ 5 & \rule{1.25em}{1.25em} & 3 & 1\\ 6 & 6 & 6 & 5 \\  7 & 7 & 4 & 3\\}}
\put(145, 80){$\rightarrow$}
\put(147,90){$\phi$}
\put(175, 100){ \tableau{3 & 3 & 2 & 2 \\ 4  & 2\\ 5 &  {} & \rule{1.25em}{1.25em} & 1\\ 6 & 6 & 6 & 5 \\  7 & 7 & 4 & 3\\}}
\put(245, 80){$\rightarrow$}
\put(247, 90){$\phi$}
\put(275, 100){\tableau{3 & 3 & 2 & 2\\ 4 & 2 & 1\\ 5 \\ 6 & 6 & 6 & 5\\ 7 & 7 & 4 & 3\\}}
\end{picture}
\end{center}

\begin{definition} Any entry that shifts into the previous column during rectification is called a {\em shifting entry}. 
\end{definition}

\begin{definition} In a RSSYT, an entry $f(c_{ij})$ has {\em diagonal dominance} if $f(c_{ij}) > f(c_{(i-1)(j+1)})$, where $f(c_{ij})$ is the entry in the $i^{th}$ column from the left and the $j^{th}$ row from the top.
\end{definition}

\begin{definition} Once the first diagonal dominant entry, $f(c_{ij})$,  is found as read from left to right, top to bottom, trace the southeast path of diagonally dominant entries. Let $S$ be the collection of the entries found on this path.
\end{definition}

Notice, this path is well-defined since there is only one southeast path of diagonally dominant entries starting at $f(c_{ij})$. We will see these entries are the shifting entries.

\begin{theorem}
The algorithm $\phi$ gives a rectification of composition tableaux for the largest entry in the first column, and commutes with the rectification of a RSSYT.
\end{theorem}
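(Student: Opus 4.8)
The plan is to prove the single identity $\phi = \rho^{-1}\circ\mu\circ\rho$; this simultaneously shows that $\phi(U)$ is a genuine composition tableau (the right-hand side manifestly is, being $\rho^{-1}$ of a RSSYT) and that $\phi$ commutes with $\mu$ under $\rho$. Since $\rho$ is a bijection that merely sorts each column into decreasing order, establishing the identity amounts to two claims: (i) $\phi(U)$ is a valid composition tableau, and (ii) sorting the columns of $\phi(U)$ reproduces $\mu(\rho(U))$. Because a RSSYT is completely determined by the multiset of entries in each of its columns, and $\mu(\rho(U))$ is already a valid RSSYT, claim (ii) reduces to showing that $\phi$ and $\mu$ effect the \emph{same change in every column}: the same entries leaving and entering each column and the same column that loses a cell. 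I would therefore first reduce the whole theorem to a comparison of the entries that move between columns under the two procedures.

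First I would analyze $\mu$. Removing the largest entry of the first column opens a hole which, under the reverse-sliding rule, only ever moves right or down, so it traces a monotone southeast lattice path; consequently each of the columns $2,3,\dots,m$ (where $m$ is the terminal column) contributes exactly one entry that moves one step to the left. The hole at cell $c_{ij}$ performs a right-move precisely when $f(c_{(i+1)j}) > f(c_{i(j+1)})$, and this is exactly the condition that the entry $f(c_{(i+1)j})$ sliding into the hole is diagonally dominant. Hence the entries that change columns under $\mu$ are exactly the diagonally dominant entries met along the slide, they form the southeast path $S$, and each moves one column left. Calling these shifting entries $v_2,\dots,v_m$, the per-column effect of $\mu$ is pinned down: column $1$ loses the removed maximum $M$ and gains $v_2$; each column $c$ with $2\le c\le m-1$ loses $v_c$ and gains $v_{c+1}$; and the terminal column $m$ loses $v_m$ and so one cell.

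Next I would run $\phi$ in parallel. By construction $\phi$ also removes $M$ from the first column and then, column by column, moves exactly one entry from each column into its left neighbor (Steps $2$–$5$), the bumping in Step $4$ being purely intra-column. The heart of the proof is the claim that the multiset of entries $\phi$ passes leftward coincides, column by column, with $v_2,\dots,v_m$, and that the terminal columns agree. I would prove this by induction on the column index: assuming columns $1,\dots,c-1$ have been matched, I compare the rule $\mu$ uses to choose its crossing entry (the diagonally dominant entry at the row where the slide enters column $c$) with the rule $\phi$ uses to choose which entry to insert and which, if any, to bump, the point being that the value passed leftward and the residual column multiset are the same whether one first sorts the column, as $\mu$ does, or operates in the original composition-tableau order, as $\phi$ does. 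Granting this, the per-column effect of $\phi$ is identical to that of $\mu$, so after sorting the columns the two tableaux agree and (ii) holds.

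The main obstacle is exactly this inductive matching: it is the composition-tableau analogue of the classical fact that jeu de taquin commutes with Schensted insertion, and the delicate part is showing that $\phi$'s bumping on the \emph{unsorted} tableau reproduces the value transferred by $\mu$'s slide on the \emph{sorted} RSSYT, including at the step where a column loses a cell. Finally, for claim (i) I would verify that $\phi(U)$ still satisfies the three defining conditions of a composition tableau: the strictly increasing first column and weakly decreasing rows are immediate from Steps $2$–$4$, and only the triple condition on $a,b,c$ requires real work, following from the diagonal dominance of the shifting entries together with their placement into the highest admissible cell. Combining (i) and (ii) yields $\rho\circ\phi=\mu\circ\rho$, equivalently $\phi=\rho^{-1}\circ\mu\circ\rho$, which is the assertion of the theorem.
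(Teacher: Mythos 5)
Your overall strategy coincides with the paper's: both arguments amount to establishing $\phi=\rho^{-1}\circ\mu\circ\rho$ by showing the two procedures effect the same column-by-column transfers, and your identification of the $\mu$-shifting entries as the diagonally dominant entries along the southeast path (exactly one per column from $2$ through the terminal column, each moving one column left) is exactly the paper's starting point. However, the step you yourself flag as ``the main obstacle'' --- that the entry $\phi$ passes from column $c$ into column $c-1$ is precisely the shifting entry $v_c$ --- is the heart of the paper's proof, and your proposal leaves it as an unproven claim. The paper does not close this gap by an inductive comparison of the sliding rule on the sorted column with the bumping rule on the unsorted one; it instead analyzes where $\rho^{-1}$ \emph{places} the shifting entries in the composition tableau. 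Concretely, it shows that every entry of column $i$ larger than the diagonally dominant entry $f(k_{ij})$ fails diagonal dominance, hence has a partner available in column $i-1$ strictly above the bottom row and is placed there, forcing $f(k_{ij})$ (and, inductively, each subsequent element of $S$) into the bottom row of the CT, while columns with no shifting entry have an empty bottom cell. Since $\phi$ by construction moves exactly the entries lying in the row of the removed cell, this lemma is what identifies $\phi$'s transfers with $\mu$'s. Without it, your induction has no concrete description of which entry of the unsorted column $\phi$ actually selects, and the assertion that sorting before or after the transfer yields the same residual column is not established.

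Two further points the paper proves that your sketch defers: first, that no entry of column $2$ is placed beside $a=f(k_{ij})$ by $\rho^{-1}$, so that inserting $a$ into the first column genuinely shifts the higher rows up and the resulting shape is correct (and that if no entry has diagonal dominance, step $1$ of $\phi$ alone suffices); second, that no column acquires a duplicate entry, which the paper derives from $f(k_{ij})<f(k_{(i-1)j})$ together with $f(k_{ij})>f(k_{(i-1)(j+1)})$, and which is needed for the image to correspond to a legitimate RSSYT column under $\rho$. Your treatment of property $3$ of a composition tableau is likewise only a promissory note. The skeleton of your argument is the right one, but the substantive content of the proof --- the bottom-row characterization of the shifting entries --- is missing.
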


\vspace{.1in}
We prove this theorem by showing all the shifting entries from the RSSYT are positioned in the bottom row of the CT. We next prove that when we move $a$ (the entry to the right of the removed cell) into the first column, $a$ creates a new row, shifting higher rows up. We conclude by showing that moving the shifting entries, regardless of any bumping, results in a diagram of a CT. Our goal is to show that $\phi$ produces the same result as applying $\rho^{-1}$ to the rectified RSSYT. 

\vspace{.1in}
\begin{proof}
Given a RSSYT, where some cells might be empty. We think of empty cells as containing the entry $0$.  Let $f(k_{ij})$ be the largest entry from the left having diagonal dominance. We will only be concerned with $f(k_{ij})$ and the entries in $S$, since these are in fact the shifting entries. Let $f(m_{ij})$ be the largest entry in $S$. 

\vspace{.1in}
\hspace{.1in}All entries $f(c_{ij})$ larger than $f(k_{ij})$ satisfy $f(c_{ij}) \leq f(c_{(i-1)(j+1)})$, since these entries do not have diagonal dominance. So, there are $(j+1)$ many cells for these entries to be placed by $\rho^{-1}$ and only $(j-1)$ many entries have been placed, leaving two cells available. Since each entry must take the highest cell available, none of these entries will be placed in the bottom row. The next entry to be placed by $\rho^{-1}$ is $f(k_{ij})$. Since $f(k_{ij}) > f(c_{(i-1)(j+1)})$ by diagonal dominance, and all the entries bigger than $f(c_{(i-1)(j+1))})$ already have an entry to their right, $f(k_{ij})$ must be placed in the bottom row. We then take $f(m_{ij})$ and notice all entries strictly larger than $f(m_{ij})$ must have either not had diagonal dominance, which then we have the exact same argument as we did with $f(k_{ij})$, or there is at least one entry, $t$, that has diagonal dominance. In this case, $t$ must be higher in the RSSYT than $f(k_{ij})$. This is because $f(m_{ij})$ is the largest entry of $S$,  so there $t$ must be placed higher than $f(k_{ij})$ if it has diagonal dominance. Since $t$ is higher than $f(k_{ij})$ there must be some entry in $f(k_{ij})$'s column that $t$ sits next to. Therefore, $t$ can be placed next to this entry in the CT, and hence is not placed in the bottom row next to $f(k_{ij})$, since $t$ must take the highest cell available. The next entry to be placed by $\rho^{-1}$ is $f(m_{ij})$. Since $f(m_{ij})$ has diagonal dominance, we have the same argument as with $f(k_{ij})$, and $f(m_{ij})$ cannot find a position available except for in the bottom row next to $f(k_{ij})$. This argument is continued for each subsequent column. Thus, we have now shown shifting entries are in the bottom row of the CT. Columns that do not have an shifting entry must not have had any entry in the bottom row of those columns in the CT. This is because these columns have the property $f(c_{ij}) \leq f(c_{(i-1)(j+1)})$, for any $i$ and $j$. Thus, each of these entries has $(j+1)$ many cells where it may be placed. Since they must take the highest cell available none of these entries will be placed in the bottom row. Hence, only shifting entries are in the bottom row of the CT. 

\vspace{.1in}
\hspace{.1in}Next, we show moving $a$ into the first column shifts the rows higher than where $a$ was placed, up. After $f(k_{ij})$ has been placed in the $(i-1)^{th}$ column, causing the $(i-1)^{th}$ column to have at least one more entry than the $i^{th}$ column, all the entries larger than $f(k_{ij})$ in the $i^{th}$ column must be placed lower than $f(k_{ij})$. All of the entries smaller than $f(k_{ij})$ in the $i^{th}$ column are then placed above where $f(k_{ij})$ has been placed. This is because $\rho^{-1}$ requires entries to take the highest cell available, in decreasing order. Thus, no entry from the $i^{th}$ column has been placed next to $f(k_{ij})$. Therefore, moving $a$, or $f(k_{ij})$, into the first column, say in row $j$, causes the $(j-1)$ rows to shift up. 
Note if there is no entry $a$, then no entry had diagonal dominance, which means the remaining entries in the first column shift up in the RSSYT. Thus, only the largest entry in the first column has been removed. Therefore, we only need step $1$ for $\phi$ in this case. 

\vspace{.1in}
\hspace{.1in}We have shown only the shifting entries have moved columns by $\mu$, and are positioned in the bottom row of the composition tableau by $\rho^{-1}$. Only these entries in the bottom row of the CT shift columns by $\phi$. Notice there are no repeated entries in the columns. This is because each column has distinct entries in the reverse semi-standard Young tableau, and the only entries added to any column are those with diagonal dominance and weakly decreasing diagonal dominance. The rules of a reverse semi-standard Young tableau imply $f(k_{ij}) < f(k_{i(j-1)})$ and $f(k_{ij}) \leq f(k_{(i-1)j})$. We also know $f(k_{i(j-1)}) \leq f(k_{(i-1)j})$ since $f(k_{ij})$ is the largest entry in column $i$ to have diagonal dominance. This gives us $f(k_{ij})<f(k_{i(j-1)}) \leq f(k_{(i-1)j})$. So, $f(k_{ij}) <f(k_{(i-1)j})$. We also know that $f(k_{ij}) > f(k_{(i-1)(j+1)})$ by diagonal dominance. Since each column has distinct entries in RSSYT, and $f(k_{ij}) <f(k_{(i-1)j})$ and $f(k_{ij}) > f(k_{(i-1)(j+1)})$, this means $f(k_{ij})$ cannot be the same as any entry in the $(i-1)^{th}$ column. This argument holds for all of the shifting entries. So, no new entry is added to any column. Hence, this process does indeed give the same tableau. 

\vspace{.1in}
\hspace{.1in}Lastly, we show $\phi$ produces a composition tableau. Notice property $1$ of a composition tableau is satisfied since $a$ is moved into the first column by $\phi$, in the highest spot allowing the column to be strictly increasing. Each shifting entry is moved into the previous column in the highest cell, such that the row into which they are placed remains weakly decreasing, satisfying property $2$ of a composition tableau. The map $\rho^{-1}$ fixes each of the columns. This is because $\rho^{-1}$ is defined to re-order each column keeping the associated entries within that column. This means bumping in $\phi$ obeys the insertion rule, and property $3$ must hold. Therefore, we do in fact have a valid map from a composition tableau to a composition tableau. 

\end{proof}

\vspace{-.5in}
\hspace{.1in}Below is an example of a composition tableau, $U$, and its associated reverse semi-standard Young tableau, $T$ followed by both of their rectifications. We can verify the rectification of $U$ using the defined process of rectifying $T$, $\mu$, and then mapping this tableau back to its associated composition tableau using $\rho^{-1}$. We can see that $\phi$ maps $U$ to the same tableau as rectifying $T$ and then applying $\rho^{-1}$.

\begin{center}
\begin{picture}(280,280)(0,0)
\setlength{\unitlength}{1pt}
\put(-20,160){$U=$}
\put(0,170){ \tableau{2 & 2 & 2 & 2 & 1\\ 3 & 1\\ 4 & 4 & 4 &3\\ 6 & 5 & 5 &1\\ \rule{1.27em}{1.27em} & 7 & 3\\}}
\put(93,160){$\phi$}
\put(90,150){$\longrightarrow$}
\put(135, 170){\tableau{2 & 2 & 2 & 2 & 1\\ 3 & 3\\ 4 & 4 & 4 &3\\ 6 & 5 & 5 &1\\ 7 & 1\\}}
\put(25, 95){$\rho$}
\put(15, 95){$\downarrow$}
\put(0, 65){ \tableau{\rule{1.25em}{1.25em} & 7 & 5 & 3 &1\\ 6 & 5 & 4 & 2\\ 4 & 4 & 3 & 1\\ 3 & 2 & 2\\ 2 & 1\\} }
\put(-20, 55){$T=$}
\put(93,55){$\mu$}
\put(90,45){$\longrightarrow$}
\put(135,65){ \tableau{7 & 5 & 5 & 3 & 1\\ 6 & 4 & 4 & 2\\ 4 & 3 & 2 & 1\\ 3 & 2\\ 2 & 1\\}}
\put(150 ,95){$\downarrow$}
\put(160,95){$\rho$}

\end{picture}
\end{center}

\section{A more generalized rectification of Composition Tableaux}

\vspace{.1in}
The following is an algorithm, $\phi$, to rectify $k$ cells in the first column of a composition tableau that are adjacent and bottom-justified, followed by an example to illustrate the algorithm.

{\bf Algorithm $\phi$:}
\begin{enumerate}
\item Remove the largest $k$ entries in column one.
\item Swap all entries directly right of the $k$-removed cells with the $k$-removed cells.
\item Reorder the rows so that the first column entries are strictly increasing.
\item Start with the largest entry in column $c$ to the right of a removed box, and insert this entry into column $(c-1)$ into the highest cell possible so that the rows remain weakly decreasing. This entry may bump any entry of smaller value. Bumped entries are moved into the next highest cell in that column so that the rows remain weakly decreasing (bumping if necessary). Repeat for the next largest entry in column $c$ to the right of a removed box. Continue until there are no more entries to the right of the removed boxes in column $c$. The cells  of the entries from column $c$ that were inserted into column $(c-1)$ are thought of as the new removed boxes. 
\item Repeat step $4$ for each subsequent column.
\item If there are no cells directly right of the removed cells then stop.
\end{enumerate}

\begin{center}
\begin{picture}(400,300)(0,0)
\setlength{\unitlength}{1pt}
\put(0,170){ \tableau{3 & 3 & 2 & 1 \\ 5 & 5 & 4 & 4 \\ \rule{1.27em}{1.27em} & 6 & 6 & 3 \\ \rule{1.27em}{1.27em} & 4 & 1 \\  \rule{1.27em}{1.27em} & 2 \\}}
\put(70,150){$\rightarrow$}
\put(90,170){ \tableau{3 & 3 & 2 & 1 \\ 5 & 5 & 4 & 4 \\6 &  \rule{1.27em}{1.27em} & 6 & 3 \\ 4 &  \rule{1.27em}{1.27em} & 1 \\ 2 & \rule{1.27em}{1.27em} \\}}
\put(170, 150){$\rightarrow$}
\put(200, 170){ \tableau{2 & \rule{1.25em}{1.25em}\\ 3 & 3 & 2 &1 \\ 4 & \rule{1.25em}{1.25em} & 1 \\ 5 & 5 & 4 & 4 \\ 6 &  \rule{1.27em}{1.27em} & 6 & 3 \\}}
\put(-20, 50){$\rightarrow$}
\put(0, 70){ \tableau{2 & 1 \\ 3 & 3 & 2 & 1 \\ 4 &  & \rule{1.25em}{1.25em} \\ 5 & 5 & 4 & 4 \\ 6 & 6 & \rule{1.25em}{1.25em}& 3 \\}}
\put(70, 50){$\rightarrow$}
\put(100, 70){\tableau{2 & 1\\3 & 3 & 3 & 1\\ 4 \\5 & 5 & 4 & 4\\6 & 6 & 2 & \rule{1.27em}{1.27em}\\}}
\put(170, 50){$\rightarrow$}
\put(200, 70){\tableau{2 & 1\\3 & 3 & 3 & 1\\ 4 \\5 & 5 & 4 & 4\\6 & 6 & 2\\}}

\end{picture}
\end{center}

\begin{lemma}
Rectifying cells $1$ through $k$ in a RSSYT, where cell $1$ contains the largest entry, for each $c$, the shifting entry in column $c$ during the $n^{th}$ rectification is the $n^{th}$ largest shifting entry in column $c$.
\end{lemma}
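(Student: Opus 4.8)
\section*{Proof proposal}

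The plan is to prove the stronger statement that, for each fixed column $c$, the shifting entries produced in that column by the successive rectifications form a non-increasing sequence. Write $s_c^{(n)}$ for the shifting entry of column $c$ during the $n$-th rectification, using the fact that one application of $\mu$ contributes at most one shifting entry per column (the southeast path of $S$ crosses each column boundary at most once). The assertion that $s_c^{(n)}$ is the $n$-th largest shifting entry of column $c$, counted with multiplicity, is then exactly $s_c^{(1)}\ge s_c^{(2)}\ge\cdots$; ties are genuinely possible (they occur when a value is repeated along a row), which is why the statement must be read with multiplicity rather than strictly. First I would reduce to the case of two consecutive rectifications by inducting on $k$: the rectifications numbered $2,\dots,k$ are literally the first $k-1$ rectifications applied to $\mu(T)$, so the induction hypothesis gives $s_c^{(2)}\ge\cdots\ge s_c^{(k)}$, and it remains only to establish the single comparison $s_c^{(1)}\ge s_c^{(2)}$ for an arbitrary RSSYT and every $c$.

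The engine of the argument is a local description of what one slide does to a single column. Tracing the empty cell, when it enters column $c$ it does so at exactly one row, the row at which it exited column $c-1$; it removes the entry sitting there (this is $s_c^{(n)}$), descends through a contiguous run of cells of column $c$, and then either terminates or turns right, at which point the entry arriving from column $c+1$ fills the vacated cell. Hence one rectification alters column $c$ by deleting $s_c^{(n)}$ and inserting the incoming entry $s_{c+1}^{(n)}$ (and inserting nothing when the path stops inside column $c$), the column being re-sorted into its unique strictly decreasing order because the output is again a RSSYT. I would also record two monotonicity facts. First, along a single slide path the values that fill the empty cell are weakly decreasing, since each newly exposed neighbor is bounded by the entry that just moved (rows weakly decrease to the right, columns strictly decrease); in particular $s_2^{(n)}\ge s_3^{(n)}\ge\cdots$ within one rectification. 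Second, in column one the successively removed corner values are non-increasing, because each is the current maximum of column one and, by the update rule, is replaced by $s_2^{(n)}$, which is no larger than it.

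The crux is the two-rectification comparison $s_c^{(1)}\ge s_c^{(2)}$, which I would prove by induction on $c$, carrying the joint hypothesis that the second slide enters column $c$ at a row no higher than the first does and that $s_c^{(2)}\le s_c^{(1)}$. The base case $c=1$ is the corner observation above. For the inductive step, the update rule says that the only change to column $c$ between the two slides is that the comparatively large entry $s_c^{(1)}$ was deleted and the no-larger entry $s_{c+1}^{(1)}$ inserted below it; combined with the inductive control on where the second slide exits column $c-1$, this should force the second empty cell to descend at least as far in column $c$ before turning right, and hence to carry off a value no larger than $s_c^{(1)}$. I expect this step to be the main obstacle: one must compare two slide paths living in two \emph{different} tableaux, show that the second path stays weakly below and to the left of the first in the precise sense of the entry-row comparison, and handle with care the degenerate cases in which a column is visited by one rectification but not the other, or in which a path terminates inside a column instead of exiting it. Once $s_c^{(1)}\ge s_c^{(2)}$ is established for every $c$, the induction on $k$ closes the argument and yields that $s_c^{(n)}$ is the $n$-th largest shifting entry of column $c$.
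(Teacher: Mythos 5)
Your reduction to the single comparison $s_c^{(1)}\ge s_c^{(2)}$ is a reasonable reading of the statement, and your bookkeeping is sound: one shifting entry per column per slide, the ``delete $s_c^{(n)}$, insert $s_{c+1}^{(n)}$'' description of how a slide alters a column, and the weak decrease of values along a single slide path are all correct. But the step you flag as ``the main obstacle'' is not merely the hard part --- the invariant you propose to carry through the induction on $c$ is false, so the argument cannot close. Take the RSSYT with rows $(9,3)$, $(8,2)$, $(7,1)$ and $k=2$. The first slide descends column $1$ (since $8>3$ and $7>2$) and only turns right in the bottom row, so $s_2^{(1)}=1$ and the path enters column $2$ at row $3$; the resulting tableau has rows $(8,3)$, $(7,2)$, $(1)$, and the second slide moves $7$ up and then pulls $2$ left (since $2>1$), so $s_2^{(2)}=2$, entering column $2$ at row $2$. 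Thus $s_2^{(2)}>s_2^{(1)}$ and the second path enters column $2$ strictly higher than the first: both halves of your inductive hypothesis fail. The reverse ordering is not universal either --- for rows $(9,8,7)$, $(6,5)$, $(3)$ one gets $s_2^{(1)}=8$ and $s_2^{(2)}=7$ --- so no monotonicity holds for the naive notion ``entry that crosses from column $c$ to column $c-1$ during slide $n$.'' Any correct version of the lemma has to identify the shifting entries differently (for instance by tagging cells of the original tableau via the eviction order introduced later in the paper) before an ordering claim can even be formulated, and your proposal does not do this.

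For comparison, the paper's own justification of this lemma is a two-sentence sketch asserting the opposite inequality --- that the $(m+1)$-th rectification produces a shifting entry sitting \emph{higher} than the $m$-th --- and it does not engage with the problem of comparing two slide paths in two different tableaux at all; it is not a complete argument either, and it is inconsistent with the literal ``$n$-th largest'' phrasing that you followed. So the concrete gaps in your proposal are twofold: the decisive inductive step is left as an expectation rather than proved, and the monotone invariant on which your whole strategy rests is contradicted by a three-row example. The strategy needs to be revised (starting with a usable definition of ``the shifting entries of column $c$''), not just completed.
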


\hspace{.1in} To prove this lemma, we show given a shifting entry, $f(c_{ij})$, of the $i^{th}$ column, that moves during the $m^{th}$ rectification, when we rectify the $(m+1)^{th}$ term, there must be a shifting entry higher than $f(c_{ij})$ since $f(c_{ij})$ will be compared to those entries in the $i^{th}$ column. Thus, the shifting entries during the $n^{th}$ rectification are the $ n^{th}$ largest shifting entries in each column.

\hspace{.1in}We can order the entries of a RSSYT in such a way that the entries that move columns during rectification are apparent. We call this ordering {\em eviction}. Remove the largest $k$-many entries in the first column. Starting with the second column, these entries remain in strictly decreasing order. We then align the entries remaining in the first column so that they are placed as high as possible on the left side of the second column with the rows weakly decreasing. The entries of the second column that have no entry of the first column beside them are removed and the remaining entries are used to align among the third column. We continue this process through each of the columns. Below is an example of a RSSYT applying the eviction ordering.

\begin{example}$$\tableau{\rule{1.25em}{1.25em} & 8 & 6\\ \rule{1.25em}{1.25em} & 5 & 3\\\rule{1.27em}{1.27em} & 4 & 1\\5 & 2\\3 \\} \hspace{.2in} \rightarrow \hspace{.2in} \tableau{& 8\\ 5 & 5\\ & 4\\ 3 & 2} \hspace{.2in} \Rightarrow \hspace{.2in} \mbox{$8$ and $4$ are shifting entries from column $2$}$$
 \mbox{$5$ and $2$ are now aligned among the third column}
$$\hspace{.575in} \rightarrow \hspace{.2in} \tableau{ & 6\\ 5 & 3\\ 2 & 1\\} \hspace{.2in} \Rightarrow \hspace{.2in} \mbox{$6$ is the only shifting entry of column $3$}$$
 \mbox{if there was fourth column then $3$ and $1$ would be aligned among that column}
\end{example}

\begin{lemma} The entries of the $i^{th}$ column in a RSSYT do not have an entry from the $(i-1)^{th}$ column beside them for eviction if and only if the entries are shifting entries of the $i^{th}$ column. 
\end{lemma}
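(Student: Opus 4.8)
The plan is to treat this lemma as the statement that eviction correctly detects shifting. I read \emph{shifting entry of the $i^{\text{th}}$ column} as an entry that slides from column $i$ into column $i-1$ during the $k$-fold rectification, so the lemma asserts that the greedy alignment of eviction flags exactly these entries by leaving them without a left neighbour. I would prove the two implications separately, and the engine in both directions is a cell-counting argument of the same flavour as the proof of the Theorem in Section~3, but now carried out inside a single pair of adjacent columns. A running induction on the column index $i$ supplies the left-hand column: the entries of column $i-1$ aligned against column $i$ are precisely the survivors of the previous eviction step, and by the inductive form of the lemma these are the non-shifting entries of column $i-1$.

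First I would make the alignment rule explicit. Writing the survivors handed to column $i$ as $a_1 > a_2 > \cdots > a_r$ and the entries of column $i$ as $b_1 > b_2 > \cdots > b_s$ (both read top to bottom), the instruction ``placed as high as possible with weakly decreasing rows'' is the greedy scan of $b_1, b_2, \ldots$ that advances a pointer $\ell$ through the $a$'s, giving $b_j$ the left neighbour $a_\ell$ exactly when $a_\ell \ge b_j$ and otherwise skipping $b_j$ while holding $a_\ell$ back for a smaller entry below; in the worked column $2$ the survivors $5,3$ meet $8,5,4,2$ and skip precisely $8$ and $4$. For the direction \emph{shifting $\Rightarrow$ no neighbour} I would use that a shifting entry $e=f(c_{ij})$ is diagonally dominant in the surviving configuration, so $f(c_{(i-1)j}) \ge e > f(c_{(i-1)(j+1)})$ and every survivor large enough to sit to the left of $e$ lies in rows strictly above $j+1$; I would then argue that the pointer reaches $e$ only after these survivors have all been spent on the strictly larger entries of column $i$, so that $e$ is skipped. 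For \emph{no neighbour $\Rightarrow$ shifting} I would run the same implication backwards: a skipped $b_j$ forces $a_\ell < b_j$ at the instant the scan reaches it, which I would repackage as the diagonal-dominance inequality and feed into the Theorem to conclude that $e$ slides.

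The main obstacle is that the obvious Hall-type count is false: comparing $\#\{a_\ell \ge e\}$ with $\#\{b_{j'} \ge e\}$ does not by itself decide skipping, because a large entry of column $i$ that is \emph{itself} skipped consumes no survivor and thereby releases a partner for a smaller entry. This is already visible in the example, where $b=5$ has only one survivor $\ge 5$ yet is matched, since $8$ is skipped first. Controlling this non-monotone ``release'' behaviour of the pointer, and showing that it agrees with the order in which the $k$ rectifications actually move entries---here I would lean on the preceding lemma that the $n^{\text{th}}$ rectification removes the $n^{\text{th}}$ largest shifting entry of each column---is where the real work lies. Once the simultaneous bookkeeping of eviction is reconciled with the one-at-a-time slides, both implications close by the induction on $i$.
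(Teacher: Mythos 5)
Your plan follows the same route as the paper's own argument for this lemma, which is given only as a prose sketch rather than a formal proof: diagonal dominance of shifting entries handles the direction \emph{shifting $\Rightarrow$ no neighbour}, the comparisons made during rectification handle the converse, and the statement is extended to later columns by induction. Your decomposition is identical, including the observation that the survivors handed to column $i$ are precisely the non-shifting entries of column $i-1$. Your explicit description of eviction as a greedy pointer scan through $a_1 > \cdots > a_r$ against $b_1 > \cdots > b_s$ is more precise than anything the paper writes down and is a genuine gain in clarity.

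However, you have correctly located, and then left open, the step on which the whole lemma turns: the non-monotone ``release'' behaviour of the matching, where a skipped large entry of column $i$ consumes no survivor and thereby frees one for a smaller entry below. Your proposal says this ``is where the real work lies'' and that both implications ``close by the induction on $i$,'' but no argument is actually supplied for why the pointer's state when the scan reaches $e = f(c_{ij})$ is governed by diagonal dominance alone; as your own example with $8$ and $5$ shows, the count $\#\{a_\ell \ge e\}$ versus $\#\{b_{j'} \ge e\}$ does not decide whether $e$ is skipped. What is needed is an argument that, by the time the scan reaches $e$, every survivor large enough to accept $e$ has already been paired with a non-shifting entry of column $i$ lying above $e$ --- for instance by exhibiting a row-by-row correspondence coming from the weakly decreasing rows of the original RSSYT. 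Be aware that the paper does not supply this either: its treatment of the lemma is a four-sentence sketch asserting that ``there can never be any entry beside a shifting entry'' without confronting the release phenomenon. So you are not missing an idea the paper has; you have simply been more candid about where the hole sits.
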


\hspace{.1in} To prove the reverse implication of this lemma we use the fact that a shifting entry has diagonal dominance, and thus, when aligning any two columns for eviction, there can never be any entry beside a shifting entry. For the forward direction, we use the comparisons made in rectification to show there cannot be a shifting entry abbove or below the entry, $f(c_{ij}0$, where $f(c_{ij})$ is the only such entry in the $i^{th}$ column that has no entry beside it in the $(i-1)^{th}$ column for eviction. We then conclude $f(c_{ij})$ must be a shifting entry because of these comparisons. We then prove this process inductively, using a similar argument. 

\begin{lemma} The shifting entries of the $i^{th}$ column, found via eviction, are exactly the entries in the same rows as the entries we want to rectify in the composition tableau.
\end{lemma}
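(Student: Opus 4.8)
The plan is to prove the following precise form of the statement. Suppose the cells rectified in the composition tableau $U$ are the bottom $k$ cells of its first column, occupying rows $m-k+1,\ldots,m$; call these the rectification rows. Then for every column $i$ I claim the shifting entries of column $i$ produced by eviction are exactly the entries of $U$ sitting in column $i$ in these same rows. Since $U=\rho^{-1}(T)$, I would first record two structural facts. The largest $k$ entries of the first column of $T$, which are exactly the entries removed at the start of eviction, are precisely the entries of $U$ occupying the rectification rows of column $1$. Moreover, because the rows of a composition diagram are left-justified, every column-$i$ cell of $U$ shares its row with a column-$(i-1)$ cell, so the reconstruction $\rho^{-1}$ assigns to each column-$i$ entry a definite column-$(i-1)$ partner in the same row.

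First I would set up an induction on the column index $i$, the inductive hypothesis being that the entries eviction passes forward out of column $i-1$ (the survivors) are exactly the column-$(i-1)$ entries of $U$ lying in the non-rectification rows $1,\ldots,m-k$, presented in decreasing order. The base case $i=2$ is the first structural fact above. For the inductive step I would compare the eviction alignment of these survivors against column $i$ of $T$ with the insertion rule $\rho^{-1}$ uses to build $U$: each inserts one entry at a time into the highest cell keeping the rows weakly decreasing. The aim of the step is to show that this alignment reproduces, on the survivors, the very same-row partnerships that $\rho^{-1}$ assigns between columns $i-1$ and $i$. Granting this, the column-$i$ entries left without a partner are exactly those whose $\rho^{-1}$-partner lay in a rectification row and was therefore stripped away one column earlier; this is the propagation that eviction is built to carry out, and it advances the induction.

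Having matched the unpaired column-$i$ entries with the rectification-row entries of column $i$, I would appeal to the preceding eviction lemma to recognise these unpaired entries as exactly the shifting entries of column $i$, which already yields the desired set equality. The ordering lemma then accounts for the count, confirming that the $k$ shifting entries of column $i$ correspond one-to-one, in decreasing order, to the $k$ successive rectifications, so that they fill precisely the rectification rows that carry a cell in column $i$. This is the assertion.

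The hard part will be the inductive step: proving that the greedy eviction alignment of the survivors against column $i$ recovers the $\rho^{-1}$ partnerships rather than some shifted matching. The genuine danger is a leftover survivor arising from a short row, a non-rectification row carrying a column-$(i-1)$ cell but no column-$i$ cell; such a survivor might reach up and pair with a rectification-row entry of column $i$, destroying the count. I expect to exclude this using the strictness of the column inequalities together with condition $(3)$ of a composition tableau, which bound how high an entry may legally be placed: the non-rectification column-$i$ entries should exhaust exactly the survivors large enough to reach the occupied rectification rows, leaving each rectification-row entry of column $i$ with no admissible survivor to its left. Controlling the long-range comparison in condition $(3)$, which ties a cell to non-adjacent cells beneath it, is the delicate point, and it is exactly where the diagonal-dominance description of the shifting entries should do the work.
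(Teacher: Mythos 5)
Your proposal follows essentially the same route as the paper: an induction over the columns that matches the greedy eviction alignment against the greedy insertion performed by $\rho^{-1}$, so that the column-$i$ entries left without a partner are exactly those placed beside rectified (or previously shifted) entries, with the preceding lemmas then identifying these unpaired entries as the shifting entries in the right order. You are in fact more explicit than the paper about the one delicate point --- verifying that the two greedy matchings coincide and ruling out interference from short rows --- which the paper's own proof simply asserts.
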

\begin{proof} Consider the shifting entry $f(c_{2j})$. Since $f(c_{2j})$ is a shifting entry that means there is no entry of the $1^{st}$ column beside it during eviction. Notice, $\phi^{-1}$ inserts all of those entries in column $2$ that are larger than $f(c_{2j})$ prior to $f(c_{2j})$. When we insert $f(c_{2j})$, the remaining entries from those rectified have been ordered as high as possible by the eviction process, and thus since $f(c_{2j})$ does not have an entry beside it during eviction, $f(c_{2j})$ must be placed among one of those entries that will be rectified. Similarly, given a shifting entry, $f(c_{ij})$, of the $i^{th}$ column, those entries of the $(i-1)^{th}$ column have been positioned as high as possible in the eviction process, and since $f(c_{ij})$ has no entry from the $(i-1)^{th}$ column beside it, $f(c_{ij})$ must be placed beside one of the shifting entries of the $(i-1)^{th}$ column which is placed in the same row as one of the entries to be rectified in the CT.  
\end{proof}

\begin{theorem} Using $\phi$ gives a rectification of composition tableaux for any number of entries in the first column, and commutes with the rectification of a RSSYT. 
\begin{proof}
By Lemma $4.1$ we know entries that shift columns during the RSSYT rectification are associated in decreasing order with the entries that are rectified. We also know by Lemma $4.2$ that eviction gives a natural way to find these entries. We then know by Lemma $4.3$ these same entries are the entries in the same rows as the entries of the CT that we are rectifying. We can then use $\phi$, which is defined to shift each of these entries exactly one column to the left, as in the RSSYT, which preserves each of the columns of the RSSYT and CT. Thus, all that is left to check is that $\phi$ produces a CT. 

\vspace{.1in}
\hspace{.1in}Step $2$ of the algorithm requires the first column to remain strictly decreasing. The only concern is, could there be duplicate entries in this column when the second column entries to the right of the removed cells are moved into the first column? In fact this cannot happen. Initially, the entries in each column are distinct. When the entries are inserted during $\rho$ they are placed so that they are as high as possible with the rows remaining weakly decreasing. This requirement ensures that an entry above those rectified, which are the only remaining entries of the first column, could not have a duplicate entry from the second column that is moved into the first column. This is because, if an entry, $b$, of the second column had the same value as one of the entries of the first column, $a$,  it must have been placed next to a, since that is as high as it could be placed. The only way an $b$ would not be alongside $a$ is if some other larger entry had been placed prior to $b$ alongside $a$. However, an entry cannot be placed alongside $a$ unless it is of equal or lesser value. Thus, an entry of value $a$ in the second column would be placed directly next to $a$ and therefore, would not shift columns during $\phi$. 

\vspace{.1in}
\hspace{.1in}Step $3$ of the algorithm rearranges each column, by possibly adding some entries from the next column, and placing them in decreasing order. This placement follows directly from the mapping of $\rho^{-1}$. Since, one column at a time, $\rho^{-1}$ places each entry in the highest possible position so that the row entries are weakly decreasing, step $3$ of $\phi$ satisfies the conditions of a composition tableau. Thus, again our only concern is if there are duplicate entries in any of the columns, since these columns must have distinct entries. We actually have the same argument as before. Any entry of the $i^{th}$ column that has an entry of the same value in $(i-1)^{th}$ column is placed next to that entry in the $(i-1)^{th}$ column, since that is as high as it can be placed. No larger entry could have taken its spot, since a larger entry will violate the weakly decreasing requirement of each row. So, we have both property $1$ and property $2$ of a composition tableau satisfied. Thus, steps $1$ and $2$ of the algorithm obey the insertion rule of $\phi$ and force property $3$ of a composition tableau to hold as well. Therefore, our algorithm does give a map from a composition tableau to a composition tableau that corresponds to the given RSSYT. 
\end{proof}
\end{theorem}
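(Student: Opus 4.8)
The plan is to verify that the bijection $\rho$ between composition tableaux and RSSYT intertwines the two rectification procedures: writing $T=\rho(U)$ for the RSSYT associated to a composition tableau $U$, and letting $\mu^{k}$ denote rectifying the $k$ largest first-column entries of $T$ in succession, I want to show $\phi(U)=\rho^{-1}(\mu^{k}(T))$, i.e. $\rho\circ\phi=\mu^{k}\circ\rho$ on composition tableaux whose $k$ removed cells are bottom-justified. As in the single-cell case (Theorem 3.4), the argument has two halves: first identify exactly which entries change columns and match them across the square, then check that the output of $\phi$ is a legitimate composition tableau.

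For the matching I would chain the three preparatory lemmas of this section. The first lemma says that under $\mu^{k}$ the only entries changing columns are the shifting entries, and that within a fixed column they are displaced in decreasing order of size (the $n$th largest during the $n$th rectification); thus the full set of column-changing entries of $\mu^{k}$ is the union over all columns of the shifting entries. The second lemma characterizes these shifting entries combinatorially through eviction, namely an entry of column $i$ is shifting exactly when it has no partner from column $i-1$ in the eviction alignment. The third lemma transports this through $\rho$: the shifting entries of column $i$ occupy precisely the rows that, in $U$, hold the entries being rectified. Because $\phi$ is defined to slide each such entry one column to the left, the identical displacement $\mu$ performs, and because $\rho^{\pm1}$ only permute entries within their own columns, both routes around the square move the same multiset of entries by the same shift, so the resulting column contents coincide.

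It remains to confirm that $\phi(U)$ satisfies the three defining properties of a composition tableau, and in particular that its output actually lies in the image of $\rho$. Property 1 is guaranteed by Step 3, which reorders the first column to be strictly increasing, and property 2 is the insertion rule built into Steps 4 and 5. The substantive issue is the distinctness of entries within each column, which is what makes $\rho^{-1}$ applicable and, with it, forces property 3. I would rule out collisions exactly as in the single-cell proof: if an entry $b$ of column $i$ had the same value as an entry $a$ of column $i-1$, then during $\rho$ the entry $b$ would have been placed directly to the right of $a$ (that being the highest legal row for $b$, with no larger entry able to preempt it without breaking weak row-decrease), so $b$ would have a neighbor in column $i-1$ and hence, by the eviction characterization, could not be a shifting entry; therefore no value that migrates under $\phi$ can duplicate an entry already present in the target column.

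The step I expect to demand the most care is this distinctness argument in the genuinely iterated setting, where several shifting entries enter the same column one after another: one must verify that the eviction ordering keeps successive insertions mutually compatible, so that an earlier insertion never pushes a later equal-valued entry into a row it is forbidden to occupy. Once distinctness is secured, property 3 follows formally, since $\rho^{-1}$ reinserts each fixed column greedily into the highest admissible row and this is precisely the bumping rule that $\phi$ emulates; the triple condition is then inherited from the corresponding configuration in $T$, completing the verification that $\phi$ both produces a composition tableau and commutes with RSSYT rectification.
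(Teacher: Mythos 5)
Your proposal follows essentially the same route as the paper's proof: it chains Lemmas 4.1--4.3 to identify the column-changing entries on both sides of the square $\rho\circ\phi=\mu^{k}\circ\rho$, and then verifies the three composition-tableau properties with the same distinctness argument (an equal-valued entry of column $i$ would have been placed beside its duplicate in column $i-1$ during $\rho$ and hence could not shift). The only cosmetic difference is that you route the distinctness claim explicitly through the eviction characterization of Lemma 4.2, and you flag---without resolving---the iterated-insertion subtlety that the paper likewise dispatches by asserting ``the same argument as before.''
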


\section{Future directions/conclusion}
\hspace{.1in}There are several directions we can go to further our generalization of rectifying composition tableau. We have looked at rectifying an entry that is not in the first column, a column other than the first column, as well as a row. Several patterns have emerged, but we are still looking for a concrete proof. We have reason to believe if we knew something about rectifying punctured diagrams, then we may have a better approach to show these generalizations. Another direction would be to look into fixing a reading order in the reverse semistandard Young tableaux in order to help with rectification of punctured diagrams. 

\bibliographystyle{plain}
\bibliography{paper2}

\end{document}